\documentclass[12pt]{article}
\usepackage{ejorsj-s2}
\usepackage{amsmath,amssymb,amsfonts,amsthm}
\usepackage{bm}
\usepackage[dvipdfmx]{graphicx}

\eqswitchon 
\newtheorem{theorem}{Theorem}[section]
\newtheorem{lemma}{Lemma}[section]

\newtheorem{example}{Example}[section]
\def\R{\mathbb{R}}

\def\eps{\varepsilon}

\title{AN APPLICATION OF BORSUK-ULAM'S THEOREM TO 
NONLINEAR PROGRAMMING}
\author{
\begin{tabular}[h]{c}  
Hidefumi Kawasaki  \\ 
\textit{Kyushu University}\\ 
\end{tabular}
}
\date{(August 26, 2023)}
\begin{document}
\maketitle
\begin{abstract}
Borsuk-Ulam's theorem is a useful tool of algebraic topology. 
It states that for any continuous mapping $f$ from the 
$n$-sphere to the $n$-dimensional Euclidean space, 
there exists a pair of antipodal points such that $f(x)=f(-x)$. 
As for its applications, ham-sandwich theorem, necklace theorem 
and coloring of Kneser graph by Lov\'{a}sz are well-known. 
This paper attempts to apply Borsuk-Ulam's theorem to nonlinear programming.
\end{abstract}
\keyword{nonlinear programming, Borsuk-Ulam's theorem, ham-sandwich theorem}
\section{Introduction}

Borsuk-Ulam's theorem \cite{Borsuk33} is an important theorem 
of algebraic topology. It states that for any continuous mapping $f$ 
from the $n$-sphere $S^n$ to the Euclidean space $\R^n$, 
there exists a point $x\in S^n$ such that $f(x)=f(-x)$. 
As for its applications, ham-sandwich theorem, necklace theorem 
and coloring of Kneser graph by Lov\'{a}sz \cite{Lovasz78} are well-known. 
It has several equivalent statements: 
Tucker's lemma is a combinatorial version, 
LSB theorem is a set-cover version, 
see e.g. Matou$\check{s}$ek~\cite{Mantousek2008}. 
This is reminiscent of Brouwer's fixed point theorem, 
which also has equivalent statements: 
Sperner's lemma is a combinatorial version, 
KKM lemma is a set-cover version. 
Borsuk-Ulam's theorem implies Brouwer's fixed point theorem. 
Howerver the converse is unknown. In this sense, 
Borsuk-Ulam's theorem seems stronger than Brouwer's fixed point theorem. 

Ham-sandwich theorem is one of the most famous applications 
of Borsuk-Ulam's theorem. 
Let $\mu$ be the Lebesgue measure of $\R^n$, 
and $A_i\subset\R^n\ (i=1,\dots,n)$ 
be compact sets with positive Lebesgue measure. 
Then ham-sandwich theorem states that there is a hyperplane $H$ which 
divides each $A_i$ in half w.r.t. the Lebesgue measure, that is,  
\[
	\mu(A_i\cap H_+)=\mu(A_i\cap H_-)\ (i=1,\dots,n),
\]
where $H_+$ and $H_-$ denote two closed half spaces determined by $H$. 
There are several versions of ham-sandwich theorem for finite points sets 
besides this statement.  

This paper aims to apply Borsuk-Ulams's theorem to nonlinear programming problems. 
In Section 2, we introduce a family of nonlinear programming problems 
with parameter $\bm u\in S^n$, and discuss the continuity 
of its optimal-value function w.r.t. $\bm u$. 
In Section 3, we apply Borsuk-Ulam's theorem to the optimal-value function. 
In Section 4, we sharpen our results. 

Throughout this paper, ${\rm int}\,A_i$ denotes the interior of $A_i$ and 
${\rm co}\,A_i$ denotes the convex hull of $A_i$. 

\section{Parametrized Nonlinear Programming Problem}  

In this section, we introduce a family of parametrized nonlinear programming problems, 
and discuss the continuity of optimal-value functions. 

For any point $\bm u=(u_1,\dots,u_n,u_{n+1})$ of $S^n$, 
we write $u=(u_1,\dots,u_n)\in\R^n$ and $\bm u=(u,u_{n+1})$. 
We assign to $\bm u\in S^n$ a hyperplane 
$H_{\bm u}=\{x\in\R^n\mid \langle u,x\rangle=u_{n+1}\}$ 
and two closed half-spaces: 
\[
	H_{\bm u}^+=\{x\in\R^n\mid \langle u,x\rangle\ge u_{n+1}\},\ 
	H_{\bm u}^-=\{x\in\R^n\mid \langle u,x\rangle\le u_{n+1}\},
\]
where $\langle u,x\rangle$ denotes the inner product $u_1x_1+\cdots +u_n x_n$.  
Then it is obvious that $H_{-\bm u}^+=H_{\bm u}^-$. 
In the case of $u\ne \bm 0$, both $H_{\bm u}^+$ and $H_{\bm u}^-$ are non-empty. 
In the case of $u=\bm 0$, one of $H_{\bm u}^+$ and $H_{\bm u}^-$ 
is $\R^n$, and the other is empty. 

Let $A_i$ be a non-empty compact convex subset of $\R^n$ 
and $f_i:\R^n\times S^n\to \R$ be a continuous function for any $i=1,\dots,n$. 
We consider $n$-tuple of optimal-value functions 
$\psi=(\psi_1,\dots,\psi_n):S^n\to \R^n$ defined by
\begin{equation}\label{opt-value}
	\psi_i(\bm u):=\left\{
	\begin{array}{cc}
	\underset{x\in A_i\cap H_{\bm u}^+}{\max}f_i(x,\bm u)
	-\underset{x\in A_i\cap H_{\bm u}^+}{\min}f_i(x,\bm u) & 
	(A_i\cap H_{\bm u}^+\ne\emptyset),\\
	0 & (A_i\cap H_{\bm u}^+=\emptyset).
	\end{array}\right.
\end{equation}
This $\psi$ and its variation $\varphi$ defined by (\ref{opt-value2}) later 
play the central role in this paper.  
We will apply Borsuk-Ulam's theorem to $\psi:S^n\to \R^n$. 
In order to show that $\psi$ is continuous on the whole set $S^n$, 
we define set-valued mappings $S_i:S^n\to A_i$ $(i=1,\dots,n)$ by
\begin{equation}\label{S(u)}
	S_i(\bm u)=A_i\cap H_{\bm u}^+\ (\bm u\in S^n).
\end{equation}
It is apparent that $S_i(\bm u)$ is empty if and only if 
$\max_{x\in A_i} \langle u,x\rangle < u_{n+1}$. 

Let $S$ be a set-valued mapping from a non-empty set $U\subset \R^m$ to a 
non-empty compact set $A\subset\R^n$. 
Then $S$ is said to be \textit{upper semi-continuous at $\bm u$} 
if $x^k\in S(\bm u^k)$ and 
$(x^k,\bm u^k)\to (x,\bm u)$ implies $x\in S(\bm u)$. 
$S$ is said to be \textit{lower semi-continuous at $\bm u$} if 
for any point $x\in S(\bm u)$ and sequence $\bm u^k\to \bm u$, 
there exists a sequence $x^k\in S(\bm u^k)$ converging to $x$. 
$S$ is said to be \textit{continuous at $\bm u$} if it is u.s.c and l.s.c. at $\bm u$. 
Lemma \ref{lem:opt-value1} below on the continuity of optimal-value functions is 
well-known in nonlinear programming, see e.g. Fiacco \cite{Fiacco1983}.

\begin{lemma}\label{lem:opt-value1} 
Let $f:\R^n\times U\to \R$ be a continuous function, and 
$A\subset\R^n$ be a non-empty compact set. 
Assume that a set-valued mapping $S:U\to A$ is continuous at 
$\bar{\bm u}\in U$. 
Then the optimal-value functions $\underset{x\in S(\bm u)}{\max} f(x,\bm u)$ 
and $\underset{x\in S(\bm u)}{\min} f(x,\bm u)$ are 
continuous at $\bar{\bm u}$. 
\end{lemma}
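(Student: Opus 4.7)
The plan is to prove continuity of $v(\bm u):=\max_{x\in S(\bm u)} f(x,\bm u)$ at $\bar{\bm u}$ by separately establishing upper and lower semicontinuity; the case of the $\min$ is entirely symmetric (apply the result to $-f$, using the identity $\min f=-\max(-f)$). Before starting, I would verify that $S(\bar{\bm u})$ is compact, so that the max and min are actually attained: it is a subset of the compact set $A$, and it is closed because the u.s.c.\ hypothesis, applied with the constant parameter sequence $\bm u^k=\bar{\bm u}$, gives exactly sequential closedness of $S(\bar{\bm u})$.

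For upper semicontinuity of $v$, I would take an arbitrary sequence $\bm u^k\to\bar{\bm u}$ and, using attainment, choose $x^k\in S(\bm u^k)$ with $v(\bm u^k)=f(x^k,\bm u^k)$. Since $x^k\in A$ and $A$ is compact, a subsequence $x^{k_j}\to x^*\in A$ exists. The u.s.c.\ hypothesis on $S$ forces $x^*\in S(\bar{\bm u})$, and continuity of $f$ yields $v(\bm u^{k_j})=f(x^{k_j},\bm u^{k_j})\to f(x^*,\bar{\bm u})\le v(\bar{\bm u})$. A routine subsequence-of-subsequence argument then upgrades this to $\limsup_{\bm u\to\bar{\bm u}} v(\bm u)\le v(\bar{\bm u})$.

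For lower semicontinuity, I would pick $\bar x\in S(\bar{\bm u})$ attaining $v(\bar{\bm u})$. For any sequence $\bm u^k\to\bar{\bm u}$, the l.s.c.\ hypothesis supplies $x^k\in S(\bm u^k)$ with $x^k\to\bar x$. Then $v(\bm u^k)\ge f(x^k,\bm u^k)\to f(\bar x,\bar{\bm u})=v(\bar{\bm u})$, so $\liminf_{k\to\infty} v(\bm u^k)\ge v(\bar{\bm u})$, which together with the previous step gives continuity of $v$ at $\bar{\bm u}$. The same argument, with the roles of u.s.c.\ and l.s.c.\ interchanged and inequalities reversed, handles the $\min$.

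The main obstacle, though not severe, is keeping straight which semicontinuity of $S$ does which job: u.s.c.\ is what lets a maximizing sequence pass to the limit while remaining feasible, whereas l.s.c.\ is what lets a fixed optimal point be approximated by feasible points at nearby parameters. Swapping the two would break the proof. A secondary subtlety is that $v(\bm u)$ needs $S(\bm u)$ to be non-empty near $\bar{\bm u}$; but given non-emptiness of $S(\bar{\bm u})$, l.s.c.\ produces $x^k\in S(\bm u^k)$ for all large $k$, so this is automatic. Once these book-keeping points are observed, the argument reduces to routine applications of compactness and continuity of $f$.
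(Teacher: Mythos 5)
Your argument is correct, and it is the standard Berge-maximum-theorem proof; the paper itself does not prove this lemma at all, but simply cites Fiacco, so there is no in-paper argument to compare against --- your write-up supplies exactly the classical proof being delegated, with the roles of upper and lower semicontinuity of $S$ assigned correctly (u.s.c.\ to pass maximizing sequences to the limit, l.s.c.\ to approximate a fixed optimizer by feasible points). One small point deserves attention: your choice of exact maximizers $x^k\in S(\bm u^k)$ with $v(\bm u^k)=f(x^k,\bm u^k)$ presumes the max is attained at the \emph{nearby} parameters $\bm u^k$, which requires $S(\bm u^k)$ to be closed; continuity of $S$ at $\bar{\bm u}$ alone gives closedness only of $S(\bar{\bm u})$. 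This is harmless here --- in the paper's application $S_i(\bm u)=A_i\cap H_{\bm u}^+$ is always compact, and in general one can instead pick near-maximizers $x^k\in S(\bm u^k)$ with $f(x^k,\bm u^k)\ge \sup_{x\in S(\bm u^k)}f(x,\bm u^k)-1/k$, after which your subsequence argument runs unchanged --- but it is worth stating the closed-valuedness explicitly or making the $\sup$ substitution.
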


\begin{lemma}\label{lem:Hu+} 
The set-valued mapping $S_i$ defined by (\ref{S(u)}) satisfies the followings:
\begin{enumerate}
\item[(1)] $S_i$ is upper semi-continuous at any $\bm u\in S^n$. 
\item[(2)] $S_i$ is continuous at $\bm u$ if 
${\rm int}\,A_i\cap H_{\bm u}^+$ is non-empty. 
\item[(3)] If $S_i(\bm u)$ is a singleton and ${\rm int}\,A_i\cap H_{\bm u}^+$ is empty, 
then for any converging sequence $\bm u^k$ to $\bm u$, 
the diameter of $S_i(\bm u^k)$ converges to $0$. 
\end{enumerate}
\end{lemma}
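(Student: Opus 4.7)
I would handle the three parts by three different techniques: a direct limit argument for (1), a convex combination trick for (2), and a compactness-plus-(1) contradiction for (3).

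For (1), take any $x^k\in S_i(\bm u^k)$ with $(x^k,\bm u^k)\to(x,\bm u)$ and pass to the limit in the two defining conditions. Closedness of $A_i$ gives $x\in A_i$, and continuity of the inner product combined with $\langle u^k,x^k\rangle\ge u^k_{n+1}$ yields $\langle u,x\rangle\ge u_{n+1}$, hence $x\in S_i(\bm u)$.

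For (2), upper semi-continuity is given by (1), so only lower semi-continuity needs work. The plan is to first select an ``interior witness'' $y_0\in\mathrm{int}\,A_i$ satisfying the \emph{strict} inequality $\langle u,y_0\rangle>u_{n+1}$. If $u\ne\bm 0$, I pick any $y\in\mathrm{int}\,A_i\cap H_{\bm u}^+$ and shift it slightly along $u$; if $u=\bm 0$, the hypothesis forces $\bm u=(\bm 0,-1)$ so any interior point works. Then, given $x\in S_i(\bm u)$ and $\bm u^k\to\bm u$, I set
\[
x^k:=(1-\lambda_k)x+\lambda_k y_0,
\]
choosing $\lambda_k\in[0,1]$ as the smallest value making $\langle u^k,x^k\rangle\ge u^k_{n+1}$. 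Writing $a_k=\langle u^k,x\rangle-u^k_{n+1}$ and $b_k=\langle u^k,y_0\rangle-u^k_{n+1}$, one has $a_k\to a\ge 0$ and $b_k\to b>0$, so $\lambda_k=\max\bigl(0,-a_k/(b_k-a_k)\bigr)\to 0$; convexity of $A_i$ keeps $x^k\in A_i$, and $x^k\to x$, giving lower semi-continuity.

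For (3), I would argue by contradiction. If $\mathrm{diam}\,S_i(\bm u^k)\not\to 0$, then along a subsequence there exist $y^k,z^k\in S_i(\bm u^k)$ with $\|y^k-z^k\|\ge\delta>0$. Compactness of $A_i$ yields further subsequences $y^k\to y$, $z^k\to z$ with $\|y-z\|\ge\delta$, and the upper semi-continuity from (1) places both $y,z\in S_i(\bm u)$, contradicting the singleton assumption. The main obstacle is the interior-witness selection in (2), which is the only step genuinely using convexity of $A_i$ and the interior hypothesis, and where the case $u=\bm 0$ must be separated out; parts (1) and (3) are essentially soft consequences of compactness and continuity of the inner product.
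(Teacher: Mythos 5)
Your proposal is correct and follows essentially the same route as the paper: part (1) by passing to the limit in the defining inequalities, part (2) by producing an interior witness with a strict inequality $\langle u,y_0\rangle>u_{n+1}$ and moving $x$ toward it along a segment inside the convex set $A_i$, and part (3) by compactness plus the singleton hypothesis. The only differences are cosmetic: the paper realizes your $x^k$ as the intersection point of $[x,b]$ with $H_{\bm u^k}$ and proves $x^k\to x$ by a subsequence--contradiction argument rather than your explicit formula $\lambda_k=\max\bigl(0,-a_k/(b_k-a_k)\bigr)\to 0$, and it leaves the $u=\bm 0$ case implicit (there it is vacuous, since $\langle u,x\rangle=u_{n+1}$ cannot occur when $u_{n+1}=-1$), whereas you separate it out explicitly.
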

\begin{proof} 
We write $\bm u=(u,u_{n+1})$ and $\bm u^k=(u^k,u^k_{n+1})$. \\
(1) Assume that $x^k\in S_i(\bm u^k)$ and $(x^k,\bm u^k)\to (x,\bm u)$. 
Then $\langle u^k,x^k\rangle\ge u_{n+1}^k$ and $x^k\in A_i$. 
Taking $k\to\infty$, we have $\langle u,x\rangle\ge u_{n+1}$ and $x\in A_i$, 
so that $x\in S_i(\bm u)$. \\
(2) Assume that $x\in S_i(\bm u)$ and $\bm u^k\to \bm u$. 
In the case of $\langle u,x\rangle>u_{n+1}$, taking $x^k:=x$, we have
\[
	|\langle u^k,x^k\rangle-u_{n+1}^k-\langle u,x\rangle+u_{n+1}|\le 
	\parallel u^k-u\parallel \parallel x\parallel +|u_{n+1}^k-u_{n+1}|\to 0.
\]
Hence $\langle u^k,x\rangle-u_{n+1}^k>0$ for all sufficiently large $k$. 
Therefore we have obtained a converging sequence $x^k\in S_i(\bm u^k)$ to $x$.

In the case of $\langle u,x\rangle=u_{n+1}$, take a point 
$a\in {\rm int}\,A_i\cap H_{\bm u}^+$, then $\langle u,a\rangle\ge u_{n+1}$. 
Since $a$ is an interior point of $A_i$, 
$b:=a+\delta u$ belongs to ${\rm int}\,A_i$ for a sufficiently small $\delta>0$, and 
\begin{equation}\label{ub>u0}
	\langle u,b\rangle> u_{n+1}.
\end{equation} 
Since $\bm u^k=(u_{n+1}^k,u^k)$ converges to $\bm u=(u,u_{n+1})$, it holds that 
$\langle u^k,b\rangle > u_{n+1}^k$ for all sufficiently large $k$. 
Hence, if $\langle u^k,x\rangle < u_{n+1}^k$, then 
the line segment joining $x$ and $b$, say $[x,b]$, and 
$H_{\bm u^k}=\{y\in\R^n\mid \langle u^k,y\rangle=u_{n+1}^k\}$ 
intersect at only one point, say $y^k$. 
Since $A_i$ is convex, $y^k$ belongs to $A_i\cap H_{\bm u^k}\subset S_i(\bm u^k)$. 
Taking 
\[
	x^k:=\begin{cases}
	x & (\langle u^k,x\rangle\ge u_{n+1}^k),\\
	y^k & (\langle u^k,x\rangle < u_{n+1}^k),
	\end{cases}
\]
we have $x^k\in S_i(\bm u^k)$. Further, $x^k$ converges to $x$. 
If not, we see from the compactness of $A_i$ that $y^k$ has a 
converging subsequence to some $y\ne x$. 
Taking $k\to \infty$ in $\langle u^k,y^k\rangle=u_{n+1}^k$, we have 
\begin{equation}\label{uy=u0}
	\langle u,y\rangle=u_{n+1}. 
\end{equation}
On the other hand, since sequence $y^k$ lies on the line segment $[x,b]$, 
so does $y$. 
It follows from (\ref{ub>u0}), (\ref{uy=u0}) and $\langle u,x\rangle=u_{n+1}$ that 
$y$ has to coincides with $x$. Thus we have obtained a converging sequence 
$x^k\in S_i(\bm u^k)$ to $x$. Therefore $S_i$ is l.s.c. at $\bm u$, so is 
continuous at $\bm u$.

(3) 
Deny the conclusion, then there exist a sequence $\bm u^k$ converging to 
$\bm u$ and $\delta>0$ such that ${\rm diam}\,S_i(\bm u^k)\ge\delta$. 
That is, there exist $y^k,\,z^k\in S(\bm u^k)$ such that 
\begin{equation}\label{y-z}
	\parallel y^k-z^k\parallel\ge\delta.
\end{equation} 
By compactness of $A_i$, we may assume that $y^k$ and $z^k$ converge to some 
$y,\,z\in A_i$, respectively. Taking $k\to \infty$ in (\ref{y-z}) and
\begin{equation}\label{ukyk}
	\langle u^k,y^k\rangle\ge u_{n+1}^k,\ \langle u^k,z^k\rangle\ge u_{n+1}^k,
\end{equation} 
we have $\parallel y-z\parallel\ge\delta$, $\langle u,y\rangle\ge u_{n+1}$, and 
$\langle u,z\rangle\ge u_{n+1}$. 
Therefore, $S_i(\bm u)$ includes distinct points $y$ and $z$, 
which contradicts the assumption of (3).
\end{proof}

\begin{theorem}\label{thm:continuous}
Let $A_i\subset \R^n$ be a compact convex set whose interior is 
non-empty and $f_i:\R^n\times S^n\to \R$ be continuous for any $i=1,\dots,n$.  
Assume that $A_i\cap H$ is a singleton for any boundary point $x$ of $A_i$ 
and for any supporting hyperplane $H$ of $A_i$ at $x$ for $i=1,\dots,n$. 
Then $\psi$ is continuous on the whole set $S^n$.
\end{theorem}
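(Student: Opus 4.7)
The plan is to fix an arbitrary $\bar{\bm u}\in S^n$ and an index $i$, and to establish continuity of $\psi_i$ at $\bar{\bm u}$ by a case analysis on the geometric relation between the half-space $H_{\bar{\bm u}}^+$ and the convex body $A_i$. Three cases arise: (a) ${\rm int}\,A_i\cap H_{\bar{\bm u}}^+\ne\emptyset$, (b) $A_i\cap H_{\bar{\bm u}}^+=\emptyset$, and (c) $A_i\cap H_{\bar{\bm u}}^+\ne\emptyset$ but ${\rm int}\,A_i\cap H_{\bar{\bm u}}^+=\emptyset$. Cases (a) and (b) are routine; the singleton hypothesis is designed to handle the degenerate case (c).

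In case (a), Lemma \ref{lem:Hu+}(2) gives continuity of $S_i$ at $\bar{\bm u}$, so Lemma \ref{lem:opt-value1} applies to both the maximum and the minimum of $f_i(\cdot,\bm u)$ over $S_i(\bm u)$; their difference $\psi_i$ is therefore continuous at $\bar{\bm u}$. In case (b), the function $\bm u\mapsto\max_{x\in A_i}\langle u,x\rangle-u_{n+1}$ is continuous (since $A_i$ is compact) and strictly negative at $\bar{\bm u}$, hence on a whole neighborhood of $\bar{\bm u}$; on that neighborhood $A_i\cap H_{\bm u}^+=\emptyset$ and $\psi_i\equiv 0=\psi_i(\bar{\bm u})$.

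The heart of the argument is case (c). I first identify $S_i(\bar{\bm u})$: the hypothesis ${\rm int}\,A_i\cap H_{\bar{\bm u}}^+=\emptyset$ forces $\langle\bar u,y\rangle<\bar u_{n+1}$ for every $y\in{\rm int}\,A_i$, and taking closure (using that ${\rm int}\,A_i$ is non-empty and hence dense in $A_i$) gives $\langle\bar u,y\rangle\le\bar u_{n+1}$ throughout $A_i$. Thus $A_i\cap H_{\bar{\bm u}}^+\subset A_i\cap H_{\bar{\bm u}}$, and $H_{\bar{\bm u}}$ is a supporting hyperplane of $A_i$ at each point of this intersection; the singleton hypothesis then collapses the whole set to a single boundary point $x_0$, so $S_i(\bar{\bm u})=\{x_0\}$ and $\psi_i(\bar{\bm u})=0$. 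Now take any sequence $\bm u^k\to\bar{\bm u}$. On indices where $S_i(\bm u^k)=\emptyset$ we have $\psi_i(\bm u^k)=0$ directly; on the remaining indices pick $y^k,z^k\in S_i(\bm u^k)$ attaining the max and the min of $f_i(\cdot,\bm u^k)$. Lemma \ref{lem:Hu+}(3) forces $\parallel y^k-z^k\parallel\to 0$, while Lemma \ref{lem:Hu+}(1) together with the compactness of $A_i$ forces every cluster point of $y^k$ (and of $z^k$) into $S_i(\bar{\bm u})=\{x_0\}$. Hence $y^k,z^k\to x_0$, and continuity of $f_i$ yields $\psi_i(\bm u^k)=f_i(y^k,\bm u^k)-f_i(z^k,\bm u^k)\to 0=\psi_i(\bar{\bm u})$.

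The main obstacle lies precisely in case (c): without the singleton hypothesis a full face of $A_i$ could sit inside $H_{\bar{\bm u}}$ and produce a positive value of $\psi_i(\bar{\bm u})$ that would then drop discontinuously as $\bm u$ is rotated away from a supporting direction. The singleton hypothesis, used through Lemma \ref{lem:Hu+}(3), shrinks the nearby feasible sets $S_i(\bm u^k)$ to a point in the limit and forces the max and the min of $f_i$ to coincide there.
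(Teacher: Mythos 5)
Your proof is correct and follows essentially the same route as the paper: the same three-case decomposition (interior intersection nonempty, intersection empty, degenerate singleton case), with Lemma \ref{lem:opt-value1} and Lemma \ref{lem:Hu+}(2) handling the first case and Lemma \ref{lem:Hu+}(3) handling the degenerate one. The only (harmless) difference is in the final limit: the paper invokes uniform continuity of $f_i$ on $A_i\times S^n$ together with the shrinking diameter of $S_i(\bm u^k)$, whereas you additionally use upper semicontinuity to force the maximizers and minimizers to converge to the single point $x_0$ and then apply joint continuity of $f_i$ --- both are valid.
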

\begin{proof}
It follows from Lemma \ref{lem:opt-value1} and Lemma \ref{lem:Hu+} (2) that 
$\psi_i$ is continuous at $\bm u$ if ${\rm int}\,A_i\cap H_{\bm u}^+$ is non-empty. 
Otherwise, $S_i(\bm u)=A_i\cap H_{\bm u}^+$ is at most a singleton by 
the assumption of the theorem. Hence $\psi_i(\bm u)=0$ by definition. 
Now, let $\bm u^k\in S^n$ be a sequence converging to $\bm u$. 

(i) When $S_i(\bm u)$ is empty, $S_i(\bm u^k)$ is also empty 
for any sufficiently large $k$. 
Hence $\psi_i(\bm u^k)=0=\psi_i(\bm u)$, so that $\psi_i$ is continuous at $\bm u$. 

(ii) When $S_i(\bm u)$ is a singleton, it holds that 
\[
	\psi_i(\bm u)=\underset{x\in S_i(\bm u)}{\max} f_i(x,\bm u)
	-\underset{x\in S_i(\bm u)}{\min} f_i(x,\bm u)=0.
\]
If $\psi_i(\bm u^k)$ dose not converge to $\psi_i(\bm u)=0$, 
then we may assume that $\psi_i(\bm u^k)\ne 0$. Hence 
$S_i(\bm u^k)$ is non-empty and
\[
	\psi_i(\bm u^k)=\underset{x\in S_i(\bm u^k)}{\max} f_i(x,\bm u^k)
	-\underset{x\in S_i(\bm u^k)}{\min} f_i(x,\bm u^k).
\]
Further, it follows from Lemma \ref{lem:Hu+} (3) that the diameter of $S_i(\bm u^k)$ 
converges to $0$. Since $f_i$ is uniformly continuous on 
the compact set $A_i\times S^n$, it holds that for any $\varepsilon>0$, 
\[
	|f_i(y,\bm u^k)-f_i(z,\bm u^k)|<\eps\quad (y,\,z\in S_i(\bm u^k))
\]
for all sufficiently large $k$. Therefore 
\[
	0\le \underset{x\in S_i(\bm u^k)}{\max} f_i(x,\bm u^k)
	-\underset{x\in S_i(\bm u^k)}{\min} f_i(x,\bm u^k)<\eps,
\]
that is, $|\psi_i(\bm u^k)-\psi_i(\bm u)|=|\psi_i(\bm u^k)|<\eps$, 
which is a contradiction. 
\end{proof}

\section{Modification of the Optimal-Value Functions}  

In this section, we calculate the optimal-value function $\psi$ for a simple example, 
and show that $\psi$ has a trivial pair of antipodal points. 
We modify $\psi$ to avoid having trivial pair. 

\begin{example}\label{exam:BU1} 
$n=1$. 
Define $f:\R\times S^1\to \R$ by $f(x_1,\bm u)=u_1x_1$, and take $A=[-1,1]$. 
Representing $\bm u\in C^1$ in polar coordinates as $\bm u=(\cos\theta, \sin\theta)$ 
$(-\pi/4\le \theta\le 7\pi/4)$, we have 
\[
		S(\bm u) = \{x_1\in [-1,1]\mid x_1 \cos\theta\ge \sin\theta\}
	= \begin{cases}
		\emptyset & (\theta=\pi/2)\\
		[-1,1] & (\theta=3\pi/2)\\
		[-1,1]\cap [\tan\theta,\infty) & (\cos\theta>0)\\
		[-1,1]\cap (-\infty,\tan\theta] & (\cos\theta<0).
	\end{cases}
\]
Hence $S(\bm u)=\emptyset\ \Leftrightarrow\ \tan\theta\notin [-1,1]\ 
\Leftrightarrow\ \pi/4<\theta<3\pi/4$. Otherwise, 
\[
		S(\bm u) = \begin{cases}
		[\tan\theta,1] & (-\pi/4\le\theta\le\pi/4)\\
		[-1,\tan\theta] & (3\pi/4\le\theta\le 5\pi/4)\\
		[-1,1] & (5\pi/4\le\theta\le 7\pi/4).
	\end{cases}
\]
By elementary calculation, we have
\[
	\underset{x\in S(\bm u)}{\max}f(x,\bm u)-\underset{x\in S(\bm u)}{\min}f(x,\bm u)=
	\left\{\begin{array}{cl}
	\cos(\theta)-\sin(\theta) & (-\pi/4\le\theta\le\pi/4)\\
	\sin(\theta)-\cos(\theta) & (3\pi/4\le\theta\le5\pi/4)\\
	2|\cos(\theta)| & (5\pi/4,7\pi/4).
	\end{array}\right.
\]
Therefore the optimal-value function (\ref{opt-value}) turns into 
\[
	\psi(\bm u)=\left\{\begin{array}{cl}
	|\cos(\theta)-\sin(\theta)| & \mbox{on\ }[-\pi/4,\pi/4]\cup [3\pi/4,5\pi/4]\\
	0 & \mbox{on\ }[\pi/4,3\pi/4]\\
	2|\cos(\theta)| & \mbox{on\ }[5\pi/4,7\pi/4].
	\end{array}\right.
\]
\begin{figure}[htb]
	\centering\includegraphics[width=50mm]{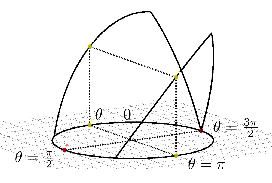}
	\caption{The graph of $\psi$ on $S^1$.}
	\label{Fig:bu1}
\end{figure}
\end{example}

In this example, 
there are two pairs of antipodal points where $\psi$ has the same value. 
One is $\psi(\bm u)=\psi(-\bm u)=1$ for $\bm u=(1,0)$ corresponding to 
$\theta=0,\,\pi$.  
The other is $\psi(\bm u^*)=\psi(-\bm u^*)=0$ for $\bm u^*=(0,1)$ 
corresponding to $\theta=\pi/2,\,3\pi/2$. We examine them. 
\begin{itemize}
\item 
For $\bm u=(1,0)$, since $S(\bm u)=A\cap H_{\bm u}^+=[0,1]$ and 
$S(-\bm u)=A\cap H_{-\bm u}^+=[-1,0]$, 
the equation $\psi(1,0)=\psi(-1,0)=1$ implies 
\begin{eqnarray*}
	\underset{x_1\in A\cap H_{\bm u}^+}{\max} u_1x_1
	-\underset{x_1\in A\cap H_{\bm u}^+}{\min} u_1x_1
	&=& \underset{x_1\in A\cap H_{-\bm u}^+}{\max}(-u_1x_1)
	-\underset{x_1\in A\cap H_{-\bm u}^+}{\min}(-u_1x_1)\\
	&=& \underset{x_1\in A\cap H_{\bm u}^-}{\max} u_1x_1
	-\underset{x_1\in A\cap H_{\bm u}^-}{\min} u_1x_1\\
	&=& 1.
\end{eqnarray*}

\item Since $S(\bm u^*)$ is empty for $\bm u^*=(0,1)$, 
$\psi(0,1)=0$ was artificially defined to ensure the continuity of $\psi$. 
Further, since $v_1=0$ and $A\cap H_{-\bm u^*}^+=A$, $\psi(0,-1)=0$ implies 
a trivial equation
\[
	\underset{x_1\in A\cap H_{-\bm u^*}^+}{\max}(-v_1x_1)
	-\underset{x_1\in A\cap H_{-\bm u^*}^+}{\min}(-v_1x_1)=
	\underset{x_1\in A}{\max}\, 0	-\underset{x_1\in A}{\min}\, 0=0.
\]
In this sense, $(\bm u^*,\,-\bm u^*)$ can be regarded as a trivial pair.
\end{itemize}

Now we modify $\psi$ to avoid having a trivial pair. 
Define $\varphi=(\varphi_1,\dots,\varphi_n):S^n\to \R^n$ by
\begin{equation}\label{opt-value1}
	\varphi_i(\bm u):=\left\{
	\begin{array}{cc}
	\underset{x\in A_i\cap H_{\bm u}^+}{\max}f_i(x,\bm u)
	-\underset{x\in A_i\cap H_{\bm u}^+}{\min}f_i(x,\bm u) & (A_i\cap H_{\bm u}^+\ne\emptyset),\\
	(1-\eps-u_{n+1})/\eps & (A_i\cap H_{\bm u}^+=\emptyset,\ u_{n+1}>1-\eps),\\
	0 & (A_i\cap H_{\bm u}^+=\emptyset,\ u_{n+1}\le 1-\eps),
	\end{array}\right.
\end{equation}
where $\eps>0$ is assumed to be sufficiently small. 
For the sake of convenience we define
\begin{equation}\label{eps-N}
	N_\eps:=\{\bm u\in S^n\mid u_{n+1}>1-\eps\},
\end{equation}
and call it \textit{$\eps$-neighborhood of the north pole $\bm u^*$}. 

\begin{lemma}\label{lem:varphi} 
Let $\eps>0$ be sufficiently small. Then under the assumptions of 
Theorem \ref{thm:continuous}, 
it holds for each $i$ that
\begin{enumerate}
\item[(1)] $\varphi_i$ takes the value $-1$ at the north pole 
$\bm u^*:=(0,\dots, 0,1)\in S^n$.  
\item[(2)] $A_i\cap H_{\bm u}^+$ is empty for any $\bm u\in N_\eps$. 
\item[(3)] $\varphi_i$ is $0$ on the boundary $\partial N_\eps$. 
\item[(4)] $\varphi_i$ is negative on $N_\eps$. 
\item[(5)] $\varphi_i$ is continuous on $S^n$. 
\end{enumerate}
\end{lemma}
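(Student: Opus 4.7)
The plan is to dispatch (1)--(4) by direct computation from the three-branch definition (\ref{opt-value1}), and then derive (5) by pasting $\varphi_i$ across the common boundary $\{u_{n+1}=1-\eps\}$ of the two closed regions $K:=\{\bm u\in S^n\mid u_{n+1}\le 1-\eps\}$ and $\overline{N_\eps}$. For (1), plug $\bm u^*=(0,\dots,0,1)$ into $H_{\bm u^*}^+=\{x\mid 0\ge 1\}=\emptyset$; the middle branch applies and gives $\varphi_i(\bm u^*)=(1-\eps-1)/\eps=-1$. For (2) and (3), the key quantitative observation is that on $\overline{N_\eps}$ one has $\|u\|^2=1-u_{n+1}^2\le 2\eps-\eps^2$; letting $M:=\max_{1\le i\le n}\max_{x\in A_i}\|x\|<\infty$ by compactness, Cauchy--Schwarz gives $\max_{x\in A_i}\langle u,x\rangle\le M\sqrt{2\eps-\eps^2}<1-\eps\le u_{n+1}$ for all sufficiently small $\eps>0$. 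Hence $A_i\cap H_{\bm u}^+=\emptyset$ on all of $\overline{N_\eps}$, proving (2); on $\partial N_\eps$ we additionally have $u_{n+1}=1-\eps$, which triggers the third branch and yields $\varphi_i\equiv 0$, proving (3). Part (4) then follows immediately: on $N_\eps$ the middle branch applies by (2), and $u_{n+1}>1-\eps$ makes $\varphi_i(\bm u)=(1-\eps-u_{n+1})/\eps<0$.

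For (5), I would establish $\varphi_i=\psi_i$ on $K$ and $\varphi_i(\bm u)=(1-\eps-u_{n+1})/\eps$ on $\overline{N_\eps}$, then glue. On $K$, if $A_i\cap H_{\bm u}^+\ne\emptyset$ both functions equal the same max--min difference, while if it is empty we have $u_{n+1}\le 1-\eps$, placing $\varphi_i$ in its third branch where it equals $0=\psi_i$; thus $\varphi_i|_K=\psi_i|_K$, which is continuous by Theorem~\ref{thm:continuous}. On $\overline{N_\eps}$ the intersection is uniformly empty by the bound above, so $\varphi_i$ is the restriction of the affine map $\bm u\mapsto(1-\eps-u_{n+1})/\eps$, which is manifestly continuous. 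The two expressions both evaluate to $0$ on the common boundary $\partial N_\eps=\{u_{n+1}=1-\eps\}$, so the pasting lemma yields continuity of $\varphi_i$ on all of $S^n$.

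The step most likely to need care is verifying emptiness of $A_i\cap H_{\bm u}^+$ uniformly on the \emph{closed} region $\overline{N_\eps}$ rather than only on its interior, since it is precisely strict emptiness up to and including $\partial N_\eps$ that makes the branch labelled by $u_{n+1}\le 1-\eps$ kick in on the boundary and lets the two pieces of the definition agree there. The compactness of each $A_i$, through the uniform bound $M$, is exactly what permits the choice of $\eps$ small enough for this to hold.
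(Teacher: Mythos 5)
Your proof is correct and follows essentially the same route as the paper: the same Cauchy--Schwarz bound $\max_{x\in A_i}\langle u,x\rangle\le M\sqrt{2\eps-\eps^2}<1-\eps\le u_{n+1}$ for (2)--(3), and the same identification $\varphi_i=\psi_i$ off $N_\eps$ for (5), where you merely spell out the pasting argument that the paper leaves implicit (and you prove (4) by direct sign inspection of the middle branch rather than by the paper's linearity-in-$u_{n+1}$ interpolation between (1) and (3), an inessential difference).
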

\begin{proof} 
Since $A_i\cap H_{\bm u^*}^+=\{x\in A_i\mid \langle 0,x\rangle\ge 1\}=\emptyset$ 
for the north pole $\bm u^*$, and since $u^*_{n+1}=1$, 
(1) is apparent from the second case of (\ref{opt-value1}). 

(2) For any $\bm u=(u,u_{n+1})\in N_\eps$, it holds that
\[
	\langle u,x\rangle\le \parallel u\parallel \parallel x\parallel\le\sqrt{2\eps-\eps^2}M_i
	<1-\eps<u_{n+1},
\]
where $M_i>0$ is an upper bound of $\{\parallel x\parallel\ \mid x\in A_i\}$. 
Therefore $A_i\cap H_{\bm u}^+$ is empty. 

(3) Assume that $\bm u$ satisfies $u_{n+1}=1-\eps$. Then as well as (2), 
$A_i\cap H_{\bm u}^+$ is empty. 
So $\varphi_i(\bm u)=0$ follows from the third case of (\ref{opt-value1}). 

(4) Since $\varphi_i(u,u_{n+1})$ is linear w.r.t. $u_{n+1}\in [1-\eps,1]$, 
(4) follows from (1) and (3).  Since $\varphi_i=\psi_i$ outside of $N_\eps$, 
we get (5). 
\end{proof}

\begin{lemma}\label{lem:varphi2}
Assume that $f_i(x,\bm u)$ is antipodal w.r.t. $\bm u$, that is,  
$f_i(x,-\bm u)=-f_i(x,\bm u)$ for any $(x,\bm u)\in\R^n\times S^n$. Then 
\begin{equation}\label{opt-value2}
	\varphi_i(-\bm u)=\left\{
	\begin{array}{cc}
	\underset{x\in A_i\cap H_{\bm u}^-}{\max}f_i(x,\bm u)
	-\underset{x\in A_i\cap H_{\bm u}^-}{\min}f_i(x,\bm u) & (A_i\cap H_{\bm u}^-\ne\emptyset),\\
	(1-\eps +u_{n+1})/\eps & (A_i\cap H_{\bm u}^-=\emptyset,\ -u_{n+1}>1-\eps),\\
	0 & (A_i\cap H_{\bm u}^-=\emptyset,\ -u_{n+1}\le 1-\eps).
	\end{array}\right.
\end{equation}
\end{lemma}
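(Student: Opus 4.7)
The plan is to verify (\ref{opt-value2}) by directly substituting $-\bm u$ into definition (\ref{opt-value1}) and simplifying each of the three cases using the antipodal property of $f_i$ together with the elementary identity $H_{-\bm u}^+ = H_{\bm u}^-$ that was recorded immediately after the definition of the half-spaces.

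First I would note that replacing $\bm u$ by $-\bm u$ inside (\ref{opt-value1}) makes the constraint set $A_i \cap H_{-\bm u}^+$ appear, which by $H_{-\bm u}^+ = H_{\bm u}^-$ is exactly $A_i \cap H_{\bm u}^-$. Hence the trichotomy that governs the case distinction in (\ref{opt-value1}) applied at $-\bm u$ becomes the trichotomy in (\ref{opt-value2}): either $A_i \cap H_{\bm u}^-$ is non-empty, or it is empty and $-u_{n+1} > 1-\eps$, or it is empty and $-u_{n+1} \le 1-\eps$. This already handles the last case (both sides equal $0$) and the middle case, where substituting $-u_{n+1}$ for $u_{n+1}$ in the formula $(1-\eps-u_{n+1})/\eps$ yields exactly $(1-\eps+u_{n+1})/\eps$.

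For the first case, the main step is to show that
\[
\max_{x\in A_i\cap H_{\bm u}^-} f_i(x,-\bm u) - \min_{x\in A_i\cap H_{\bm u}^-} f_i(x,-\bm u)
=\max_{x\in A_i\cap H_{\bm u}^-} f_i(x,\bm u) - \min_{x\in A_i\cap H_{\bm u}^-} f_i(x,\bm u).
\]
I would invoke the hypothesis $f_i(x,-\bm u) = -f_i(x,\bm u)$ together with the elementary facts $\max(-g) = -\min g$ and $\min(-g) = -\max g$ over any fixed set. The two minus signs then cancel and the left-hand side rearranges to the right-hand side.

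The argument is essentially a direct substitution, so there is no serious obstacle; the only thing to be careful about is making sure that the case split in (\ref{opt-value1}) at the point $-\bm u$ is correctly translated into the case split displayed in (\ref{opt-value2}), i.e.\ that the conditions $A_i \cap H_{-\bm u}^+ = \emptyset$ and $(-\bm u)_{n+1} > 1-\eps$ become $A_i \cap H_{\bm u}^- = \emptyset$ and $-u_{n+1} > 1-\eps$ respectively. Once this bookkeeping is in place, the three cases of (\ref{opt-value2}) drop out immediately.
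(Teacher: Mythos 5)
Your proposal is correct and follows essentially the same route as the paper: substitute $-\bm u$ into (\ref{opt-value1}), use $H_{-\bm u}^+=H_{\bm u}^-$ to translate the case split, and apply the antipodal hypothesis with $\max(-g)=-\min g$ to handle the non-empty case. The paper's proof is just a terser version of the same argument.
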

\begin{proof} 
When $A_i\cap H_{-\bm u}^+$ is non-empty, since $H_{-\bm u}^+=H_{\bm u}^-$, we have
\begin{eqnarray*}
	\varphi_i(-\bm u) &=& \underset{x\in A_i\cap H_{-\bm u}^+}{\max}f_i(x,-\bm u)
	-\underset{x\in A_i\cap H_{-\bm u}^+}{\min}f_i(x,-\bm u)\\
	&=& \underset{x\in A_i\cap H_{\bm u}^-}{\max}-f_i(x,\bm u)
	-\underset{x\in A_i\cap H_{\bm u}^-}{\min}-f_i(x,\bm u)\\
	&=& \underset{x\in A_i\cap H_{\bm u}^-}{\max}f_i(x,\bm u)
	\underset{x\in A_i\cap H_{\bm u}^-}{-\min}f_i(x,\bm u).
\end{eqnarray*}
Otherwise, (\ref{opt-value2}) directly follows from the definition of $\varphi_i$. 
\end{proof}

By applying Borsuk-Ulam's theorem to $\varphi:S^n\to\R^n$, 
we obtain the following.

\begin{theorem}\label{thm:BU-opt1}
If the assumptions of Theorem \ref{thm:continuous} and Lemma \ref{lem:varphi2} 
are satisfied, then there exists some $\bm u\in S^n$ such that 
either of (i) or (ii) below holds for each $i=1,\dots,n$.  
\begin{enumerate}
\item[(i)] Both $A_i\cap H_{\bm u}^+$ and $A_i\cap H_{\bm u}^-$ are non-empty, 
and
\begin{equation}\label{max-min1}
	\underset{x\in A_i\cap H_{\bm u}^+}{\max}f_i(x,\bm u)
	-\underset{x\in A_i\cap H_{\bm u}^+}{\min}f_i(x,\bm u)
	=\underset{x\in A_i\cap H_{\bm u}^-}{\max}f_i(x,\bm u)
	-\underset{x\in A_i\cap H_{\bm u}^-}{\min}f_i(x,\bm u).
\end{equation}
\item[(ii)] One of $A_i\cap H_{\bm u}^+$ and $A_i\cap H_{\bm u}^-$ is empty, 
and the other is $A_i$. Further $f_i(\cdot,\bm u)$ is constant on $A_i$. 
\end{enumerate}
\end{theorem}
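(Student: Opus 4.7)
The plan is to apply Borsuk--Ulam's theorem directly to the continuous map $\varphi:S^n\to\R^n$, whose continuity is supplied by Lemma~\ref{lem:varphi}(5). This produces a point $\bm u\in S^n$ with $\varphi_i(\bm u)=\varphi_i(-\bm u)$ for every $i=1,\dots,n$, and the task is to extract conclusion (i) or (ii) from this coincidence. Since the modification from $\psi$ to $\varphi$ was introduced precisely to rule out the trivial polar pair of Example~\ref{exam:BU1}, the main obstacle is the first non-routine step: showing that the pair produced by Borsuk--Ulam is not polar, i.e.\ that the coincidence point satisfies $|u_{n+1}|\le 1-\eps$.

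For this, I would argue by contradiction. Suppose $u_{n+1}>1-\eps$, so $\bm u\in N_\eps$; then Lemma~\ref{lem:varphi}(4) gives $\varphi_i(\bm u)<0$. Meanwhile $-u_{n+1}<0<1-\eps$, so $-\bm u\notin N_\eps$, and by Lemma~\ref{lem:varphi2} the value $\varphi_i(-\bm u)$ is either the non-negative oscillation of $f_i(\cdot,\bm u)$ over $A_i\cap H_{\bm u}^-$ or $0$; in either case $\varphi_i(-\bm u)\ge 0>\varphi_i(\bm u)$, contradicting the Borsuk--Ulam equality. By the same argument with the roles of $\bm u$ and $-\bm u$ exchanged (now applying Lemma~\ref{lem:varphi}(4) at $-\bm u$ and (\ref{opt-value1}) at $\bm u$), the case $u_{n+1}<-(1-\eps)$ is also excluded. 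The sign mismatch driving this step rests crucially on the antipodal hypothesis $f_i(x,-\bm u)=-f_i(x,\bm u)$, entering through Lemma~\ref{lem:varphi2}.

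With $\bm u$ confined to the equatorial band $|u_{n+1}|\le 1-\eps$, only the first or third branch of (\ref{opt-value1}) can define $\varphi_i(\bm u)$, and similarly the first or third branch of (\ref{opt-value2}) defines $\varphi_i(-\bm u)$. Since $H_{\bm u}^+\cup H_{\bm u}^-=\R^n$ and $A_i\neq\emptyset$, the two sets $A_i\cap H_{\bm u}^+$ and $A_i\cap H_{\bm u}^-$ cannot both be empty. If both are non-empty, the Borsuk--Ulam equality is exactly (\ref{max-min1}), which is case (i). If, say, $A_i\cap H_{\bm u}^+=\emptyset$, then $A_i\subset H_{\bm u}^-$, so $A_i\cap H_{\bm u}^-=A_i$ and $\varphi_i(\bm u)=0$; the equality $\varphi_i(-\bm u)=0$ then forces $\max_{x\in A_i}f_i(x,\bm u)=\min_{x\in A_i}f_i(x,\bm u)$, i.e.\ $f_i(\cdot,\bm u)$ is constant on $A_i$, which is case (ii). The remaining subcase $A_i\cap H_{\bm u}^-=\emptyset$ is symmetric.
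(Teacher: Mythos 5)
Your proposal is correct and follows essentially the same route as the paper: apply Borsuk--Ulam to the continuous map $\varphi$, then split on whether both of $A_i\cap H_{\bm u}^+$, $A_i\cap H_{\bm u}^-$ are non-empty, using the sign mismatch between the branch $(1-\eps-u_{n+1})/\eps<0$ and the non-negative oscillation to exclude the polar caps. The only (harmless) difference is that you rule out $|u_{n+1}|>1-\eps$ up front for all $i$, whereas the paper runs the same contradiction inside case (ii).
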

\begin{proof}
By Borsuk-Ulam's theorem,  there exists some $\bm u\in S^n$ such that 
$\varphi_i(\bm u)=\varphi_i(-\bm u)$ for all $i=1,\dots,n$.  

(i) When both $A_i\cap H_{\bm u}^+$ and $A_i\cap H_{\bm u}^-$ are non-empty, 
(\ref{max-min1}) is a direct consequence of (\ref{opt-value1}) and (\ref{opt-value2}). 

(ii) When $A_i\cap H_{\bm u}^+=\emptyset$, it holds that $A_i\cap H_{\bm u}^-=A_i$. 
Suppose that $u_{n+1}>1-\eps$, then we get from 
(\ref{opt-value1}) and (\ref{opt-value2}) that 
\[
	0>\frac{1-\eps -u_{n+1}}{\eps}=\varphi_i(\bm u)=\varphi_i(-\bm u)=
	\underset{x\in A_i}{\max}f_i(x,\bm u)-\underset{x\in A_i}{\min}f_i(x,\bm u)\ge 0,
\]
which is a contradiction. Hence $u_{n+1}\le 1-\eps$. So by definition of $\varphi_i$, 
\[
	0=\varphi_i(\bm u)=\varphi_i(-\bm u)=
	\underset{x\in A_i}{\max}f_i(x,\bm u)-\underset{x\in A_i}{\min}f_i(x,\bm u),
\]
which implies that $f_i(\cdot,\bm u)$ is constant on $A_i$.

Similarly $f_i(\cdot,\bm u)$ is constant on $A_i$ when $A_i\cap H_{\bm u}^-$ 
is empty.    
\end{proof}

When we take a bilinear form as $f_i(x,\bm u)$, 
case (ii) of Theorem \ref{thm:BU-opt1} is removed. 

\begin{theorem}\label{thm:BU-opt0}
Assume that the assumptions of Theorem \ref{thm:continuous} are satisfied. 
Let $Q_i$ be a non-singular matrix of order $n$, and take 
$f_i(x,\bm u)=\langle u,Q_ix\rangle$ for any $i=1,\dots,n$. 
Then there exists some $\bm u\in S^n$ such that 
both $A_i\cap H_{\bm u}^+$ and $A_i\cap H_{\bm u}^-$ are non-empty, 
and
\begin{equation}\label{max-min0}
	\underset{x\in A_i\cap H_{\bm u}^+}{\max}f_i(x,\bm u)
	-\underset{x\in A_i\cap H_{\bm u}^+}{\min}f_i(x,\bm u)
	=\underset{x\in A_i\cap H_{\bm u}^-}{\max}f_i(x,\bm u)
	-\underset{x\in A_i\cap H_{\bm u}^-}{\min}f_i(x,\bm u).
\end{equation}
\end{theorem}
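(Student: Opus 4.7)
The plan is to apply Theorem~\ref{thm:BU-opt1} with the specific bilinear choice $f_i(x,\bm u)=\langle u,Q_ix\rangle$ and then eliminate case~(ii) of that theorem using the non-singularity of $Q_i$ together with ${\rm int}\,A_i\ne\emptyset$. Once case~(ii) is excluded, case~(i) yields exactly the equality (\ref{max-min0}).

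First I would check the antipodal hypothesis of Lemma~\ref{lem:varphi2}: since $f_i(x,-\bm u)=\langle -u,Q_ix\rangle=-f_i(x,\bm u)$, it holds. Applying Theorem~\ref{thm:BU-opt1} then produces some $\bm u\in S^n$ at which, for each $i=1,\dots,n$, either case~(i) or case~(ii) holds.

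To rule out case~(ii) at this $\bm u$, I would argue as follows. If case~(ii) were to hold for some $i$, then $f_i(\cdot,\bm u)=\langle Q_i^{\top}u,\cdot\rangle$ would be constant on $A_i$; since $A_i$ has non-empty interior, this linear functional must vanish identically on an open ball, giving $Q_i^{\top}u=0$, and non-singularity of $Q_i$ then forces $u=0$. Because $\bm u\in S^n$, this leaves only $\bm u=\pm\bm u^*$. To exclude the poles, I would compute $\varphi_i$ directly: Lemma~\ref{lem:varphi}(1) gives $\varphi_i(\bm u^*)=-1$, while at $-\bm u^*$ one has $A_i\cap H_{-\bm u^*}^+=A_i\ne\emptyset$ and $f_i(\cdot,-\bm u^*)\equiv 0$, so $\varphi_i(-\bm u^*)=0$. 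Thus $\varphi_i(\bm u^*)\ne\varphi_i(-\bm u^*)$, contradicting the Borsuk--Ulam equality; the same computation with the roles swapped rules out $\bm u=-\bm u^*$. Hence case~(ii) is impossible, case~(i) holds for every $i$, and (\ref{max-min0}) follows.

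The only substantive step is the passage from the constancy of $f_i(\cdot,\bm u)$ on $A_i$ to $u=0$, which rests on non-singularity of $Q_i$ and non-emptiness of ${\rm int}\,A_i$; the pole-exclusion is then a one-line check using Lemma~\ref{lem:varphi}(1). I do not anticipate any real obstacle beyond getting this bookkeeping right.
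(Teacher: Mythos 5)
Your proposal follows the paper's argument exactly: apply Theorem~\ref{thm:BU-opt1} after verifying the antipodal hypothesis, then rule out case~(ii) by observing that constancy of $\langle Q_i^{\top}u,\cdot\rangle$ on the open set ${\rm int}\,A_i$ forces $Q_i^{\top}u=0$, hence $u=0$ by non-singularity, and finally excluding $\bm u=\pm\bm u^*$ via the asymmetry $\varphi_i(\bm u^*)=-1\ne 0=\varphi_i(-\bm u^*)$. The reasoning is correct and coincides with the paper's proof.
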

\begin{proof}
Since $\langle u,Q_ix\rangle$ is antipodal w.r.t. $\bm u$ for any $x$, 
Theorem \ref{thm:BU-opt1} is applicable, 
and (\ref{max-min0}) is nothing but case (i) of Theorem \ref{thm:BU-opt1}.  

Next we show that case (ii) of Theorem \ref{thm:BU-opt1} is impossible. 
Suppose that $\bm u$ satisfies (ii). 
Then $f_i(x,\bm u)=\langle u,Q_ix\rangle$ is constant on $A_i$. 
Since the interior of $A_i$ is non-empty, it implies that 
$u=\bm 0\in\R^n$, so that $\bm u=(\bm 0,\pm 1)$. 
In the case of $\bm u$ is the north pole $(\bm 0,1)$, 
we get $\varphi_i(\bm u)=-1$ from definition of $\varphi_i$. 
Further since $A_i\cap H_{-\bm u}^+=A_i$, we have 
\[
	\varphi_i(\bm u)=\varphi_i(-\bm u)=
	\underset{x\in A_i}{\max}\langle \bm 0,Q_ix\rangle-
	\underset{x\in A_i}{\min}\langle \bm 0,Q_i x\rangle=0,
\]
which is a contradiction. 
In the case of $\bm u=(\bm 0,-1)$, since $A_i\cap H_{\bm u}^+=A_i$, we have
\[
	\varphi_i(\bm u)=\underset{x\in A_i}{\max}\langle \bm 0,Q_i x\rangle-
	\underset{x\in A_i}{\min}\langle \bm 0,Q_i x\rangle=0.
\]
On the other hand, $\varphi_i(\bm u)=\varphi_i(-\bm u)=\varphi_i(\bm 0,1)=-1$, 
which is a contradiction. Therefore (ii) is impossible. 
\end{proof}

\section{Special case of $f_i(x,\bm u)$}

In Sections 2 and 3, we required the strict convexity of $A_i$ 
to guarantee the continuity of $\psi_i$ and $\varphi_i$. 
If we take $f_i(x,\bm u)=\langle u,x\rangle$, we do not need this assumption. 
This section focuses on $f_i(x,\bm u)=\langle u,x\rangle$. 

\begin{lemma}\label{lem:<ux>} 
When we take $f_i(x,\bm u)=\langle u,x\rangle$ for any $i=1,\dots,n$, 
$\varphi_i$ is continuous on the whole $S^n$ 
without assuming the strict convexity of $A_i$.  
\end{lemma}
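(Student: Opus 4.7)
The plan is to bypass the set-valued lower semicontinuity argument of Theorem \ref{thm:continuous} by exploiting that $f_i(x,\bm u)=\langle u,x\rangle$ is exactly the linear functional that defines the half-space $H_{\bm u}^+$. With this coincidence, the inner extrema of $\langle u,\cdot\rangle$ over $A_i\cap H_{\bm u}^+$ reduce to simple expressions in the unconstrained extrema of $\langle u,\cdot\rangle$ over the fixed compact convex set $A_i$, which removes any need for $S_i$ to be lower semicontinuous and, in particular, removes the strict-convexity hypothesis.

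First I would introduce
\begin{equation*}
M_i(\bm u):=\max_{x\in A_i}\langle u,x\rangle,\qquad m_i(\bm u):=\min_{x\in A_i}\langle u,x\rangle,
\end{equation*}
which are continuous on $S^n$ because $A_i$ is a fixed compact set. Note that $A_i\cap H_{\bm u}^+$ is nonempty if and only if $u_{n+1}\le M_i(\bm u)$, and in that case any global maximizer of $\langle u,\cdot\rangle$ over $A_i$ already satisfies $\langle u,x\rangle\ge u_{n+1}$, so
\begin{equation*}
\max_{x\in A_i\cap H_{\bm u}^+}\langle u,x\rangle=M_i(\bm u).
\end{equation*}
Since $A_i$ is convex and $\langle u,\cdot\rangle$ is continuous and linear, the functional attains every value in $[m_i(\bm u),M_i(\bm u)]$ on $A_i$, so imposing $\langle u,x\rangle\ge u_{n+1}$ merely truncates this interval from below by $u_{n+1}$ and
\begin{equation*}
\min_{x\in A_i\cap H_{\bm u}^+}\langle u,x\rangle=\max\{u_{n+1},m_i(\bm u)\}.
\end{equation*}
Consequently, on the nonempty region,
\begin{equation*}
\psi_i(\bm u)=M_i(\bm u)-\max\{u_{n+1},m_i(\bm u)\},
\end{equation*}
a manifestly continuous function of $\bm u$.

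To finish, I would check continuity of $\varphi_i$ at the transitions between the three branches of (\ref{opt-value1}). At the nonempty/empty transition $u_{n+1}=M_i(\bm u)$ the nonempty formula gives $M_i(\bm u)-u_{n+1}=0$; the subtle point—and the only real obstacle—is that approaching from the empty side might a priori place us in the linear ramp branch $(1-\eps-u_{n+1})/\eps$, producing a jump. This is exactly what Lemma \ref{lem:varphi}(2) rules out: for $\eps$ sufficiently small, $N_\eps$ is disjoint from $\{\bm u:A_i\cap H_{\bm u}^+\ne\emptyset\}$, so the transition satisfies $u_{n+1}\le 1-\eps$ and the third branch returns $0$, matching. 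At the remaining transition $u_{n+1}=1-\eps$ within the empty region, the linear ramp and the constant branch both equal $0$, so the three pieces glue continuously and $\varphi_i$ is continuous on all of $S^n$.
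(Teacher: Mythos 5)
Your proof is correct, but it takes a genuinely different and more elementary route than the paper's. The paper proves continuity by a three-way case analysis on $\bm u$: when ${\rm int}\,A_i\cap H_{\bm u}^+\ne\emptyset$ it falls back on the set-valued machinery (Lemma \ref{lem:opt-value1} together with Lemma \ref{lem:Hu+}(2)); in the delicate case where $A_i\cap H_{\bm u}^+$ is nonempty but meets only the boundary of $A_i$, it runs a $\limsup/\liminf$ argument using the one special feature of $f_i=\langle u,\cdot\rangle$, namely that the objective is bounded below by $u_{n+1}$ on the feasible set; the empty case is handled as you do. You instead exploit that special feature globally to write a closed form, $\psi_i(\bm u)=M_i(\bm u)-\max\{u_{n+1},m_i(\bm u)\}$ on the nonempty region (equivalently $\varphi_i(\bm u)=\max\{0,\,M_i(\bm u)-\max\{u_{n+1},m_i(\bm u)\}\}$ outside $N_\eps$, which unifies the first and third branches), so that continuity becomes manifest from the continuity of the support functions $M_i$ and $m_i$ of the fixed compact set $A_i$; convexity enters only through the intermediate-value step identifying the constrained minimum, and lower semicontinuity of $S_i$ is never needed. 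Your handling of the branch transitions is also sound: the nonempty region is contained in $\{u_{n+1}\le 1-\eps\}$ by Lemma \ref{lem:varphi}(2)--(3), so near its boundary the empty branch is the constant-zero one, and the ramp and zero branches agree at $u_{n+1}=1-\eps$. What your approach buys is brevity and an explicit formula for $\varphi_i$; what the paper's buys is a template that parallels Theorem \ref{thm:continuous} and reuses its lemmas, which is why the author structured it that way.
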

\begin{proof}
(i) When ${\rm int}\,A_i\cap H_{\bm u}^+$ is non-empty, 
$A_i\cap H_{\bm u'}^+$ is also non-empty 
for any $\bm u'$ sufficiently close to $\bm u$. 
Hence $\varphi_i(\bm u')=\psi_i(\bm u')$ by definition, which implies that 
$\varphi_i$ is continuous at $\bm u$. 

(ii) When ${\rm int}\,A_i\cap H_{\bm u}^+$ is empty 
and $A_i\cap H_{\bm u}^+$ is non-empty, it holds that
\[
	A_i\cap H_{\bm u}^+=\{x\in A_i\mid \langle u,x\rangle\ge u_{n+1}\}
	=\{x\in A_i\mid \langle u,x\rangle=u_{n+1}\}=A_i\cap H_{\bm u}.
\]
Therefore
\[
	\varphi_i(\bm u) = 
	\underset{x\in A_i\cap H_{\bm u}^+}{\max}\langle u,x\rangle-
	\underset{x\in A_i\cap H_{\bm u}^+}{\min}\langle u,x\rangle=0.
\]
Since $\bm u$ is not the north pole, it is outside of the $\eps$-neighborhood $N_\eps$ 
of the north pole for sufficiently small $\eps>0$. 
Now assume that $\bm u^k$ converges to $\bm u$. 
Then we may assume that $\bm u^k$ is also outside of $N_\eps$. 
So $\varphi_i(\bm u^k)\ge 0$ by definition. 
Since we want to prove $\varphi_i(\bm u^k)$ converges to $\varphi_i(\bm u)=0$, 
we may ignore $\bm u^k$ such that $\varphi_i(\bm u^k)=0$. 
Otherwise, 
\begin{eqnarray}
	\varphi_i(\bm u^k) &=& 
	\underset{x\in A_i\cap H_{\bm u^k}^+}{\max}\langle u^k,x\rangle-
	\underset{x\in A_i\cap H_{\bm u^k}^+}{\min}\langle u^k,x\rangle\nonumber\\
	&=& 
	\underset{x\in A_i,\ \langle u^k,x\rangle\ge u_{n+1}^k}{\max}\langle u^k,x\rangle-
	\underset{x\in A_i,\ \langle u^k,x\rangle\ge u_{n+1}^k}{\min}\langle u^k,x\rangle
	\nonumber\\
	& \le &
	\underset{x\in A_i,\ \langle u^k,x\rangle\ge u_{n+1}^k}{\max}\langle u^k,x\rangle-
	u_{n+1}^k.\label{varphi-max}
\end{eqnarray}
Let $x=y^k\in A_i$ maximize RHS of (\ref{varphi-max}). Then
$0 < \varphi_i(\bm u^k)\le \langle u^k,y^k\rangle-u_{n+1}^k$. Hence
\[
	0 \le \limsup_{k\to\infty}\varphi_i(\bm u^k)\le 
	\limsup_{k\to\infty}(\langle u^k,y^k\rangle-u_{n+1}^k).
\]
By taking a subsequence of $y^k$, we may assume that 
\[
	\limsup_{k\to\infty}(\langle u^k,y^k\rangle-u_{n+1}^k)=
	\lim_{k\to\infty}(\langle u^k,y^k\rangle-u_{n+1}^k).
\]
Further, by taking a converging subsequence $y^k\to y$, we have
\[
	0 \le \lim_{k\to\infty}(\langle u^k,y^k\rangle-u_{n+1}^k)=\langle u,y\rangle-u_{n+1}.
\]
Hence $y\in A_i\cap H_{\bm u}^+=A_i\cap H_{\bm u}$, that is, 
$\langle u,y\rangle-u_{n+1}=0$. Therefore
\[
	0\le \liminf_{k\to\infty}\varphi_i(\bm u^k)
	\le \limsup_{k\to\infty}\varphi_i(\bm u^k)=0.
\]

(iii) When $A_i\cap H_{\bm u}^+$ is empty, 
$A_i\cap H_{\bm u'}^+$ is also empty for any $\bm u'$ sufficiently close to $\bm u$. 
Hence $\varphi_i(\bm u)$ and $\varphi_i(\bm u')$ are defined by 
the second or the third case of (\ref{opt-value1}). 
So $\varphi_i$ is continuous at $\bm u$. 
\end{proof}

\begin{theorem}\label{thm:BU-opt2}
Let $A_i\subset \R^n\ (i=1,\dots,n)$ be compact convex sets whose interiors are 
non-empty. Then there exists some $\bm u\in S^n$ such that 
both $A_i\cap H_{\bm u}^+$ and $A_i\cap H_{\bm u}^-$ are non-emopty and 
\begin{equation}\label{max-min2}
	\delta^*(u\mid A_i\cap H_{\bm u}^+)-\delta_*(u\mid A_i\cap H_{\bm u}^+)
	=\delta^*(u\mid A_i\cap H_{\bm u}^-)-\delta_*(u\mid A_i\cap H_{\bm u}^-) 
\end{equation}
for all $i=1,\dots,n$, 
where $\delta^*$ and $\delta_*$ denote the support function and the infimum 
support function:
\[
	\delta^*(u\mid X):=\underset{x\in X}{\max}\langle u,x\rangle,\ 
	\delta_*(u\mid X):=\underset{x\in X}{\min}\langle u,x\rangle,
\]
respectively. 
\end{theorem}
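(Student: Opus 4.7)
My plan is to adapt the proof of Theorem \ref{thm:BU-opt0} by replacing its use of Theorem \ref{thm:continuous} (which required strict convexity of the $A_i$) with Lemma \ref{lem:<ux>} (which does not). Concretely, I will apply Borsuk-Ulam's theorem to the map $\varphi = (\varphi_1, \dots, \varphi_n) : S^n \to \R^n$ defined by (\ref{opt-value1}) with $f_i(x, \bm u) = \langle u, x \rangle$.

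Lemma \ref{lem:<ux>} yields continuity of each $\varphi_i$ on all of $S^n$ assuming only that $A_i$ has non-empty interior, so Borsuk-Ulam produces some $\bm u \in S^n$ with $\varphi_i(\bm u) = \varphi_i(-\bm u)$ for every $i$. Since $\langle -u, x \rangle = -\langle u, x \rangle$, the antipodality hypothesis of Lemma \ref{lem:varphi2} is satisfied, and that lemma rewrites $\varphi_i(-\bm u)$ in terms of $A_i \cap H_{\bm u}^-$. In the case where both $A_i \cap H_{\bm u}^+$ and $A_i \cap H_{\bm u}^-$ are non-empty, equating (\ref{opt-value1}) and (\ref{opt-value2}) gives (\ref{max-min2}) immediately.

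The main obstacle is excluding the degenerate possibility that one of $A_i \cap H_{\bm u}^\pm$ is empty. Up to swapping $\bm u$ with $-\bm u$, suppose $A_i \cap H_{\bm u}^+ = \emptyset$, so that $A_i \cap H_{-\bm u}^+ = A_i$. By Lemma \ref{lem:varphi2}, $\varphi_i(-\bm u) = \delta^*(u \mid A_i) - \delta_*(u \mid A_i) \ge 0$, whereas (\ref{opt-value1}) forces $\varphi_i(\bm u) \le 0$. The equality $\varphi_i(\bm u) = \varphi_i(-\bm u)$ therefore makes both sides vanish, so $\langle u, \cdot \rangle$ is constant on $A_i$; since ${\rm int}\, A_i$ is non-empty this forces $u = \bm 0$, hence $\bm u = (\bm 0, \pm 1)$. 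The same evaluation at the poles that closes the proof of Theorem \ref{thm:BU-opt0} (namely $\varphi_i = -1$ at the north pole by Lemma \ref{lem:varphi}(1) versus $\varphi_i = 0$ at the south pole) contradicts $\varphi_i(\bm u) = \varphi_i(-\bm u)$. This rules out the degenerate case and establishes (\ref{max-min2}).
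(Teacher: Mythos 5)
Your proof is correct and takes essentially the same route as the paper: the paper's proof also takes $f_i(x,\bm u)=\langle u,x\rangle$, invokes Lemma \ref{lem:<ux>} in place of Theorem \ref{thm:continuous} to get continuity of $\varphi$ without strict convexity, and then reruns the Borsuk--Ulam argument of Theorem \ref{thm:BU-opt0} (including the exclusion of the degenerate case via $u=\bm 0$ and the evaluation of $\varphi_i$ at the poles). You have merely written out explicitly the steps the paper delegates to the proofs of Theorems \ref{thm:BU-opt0} and \ref{thm:BU-opt1}.
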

\begin{proof}
Take $f_i(x,\bm u)=\langle u,x\rangle$ for any $i=1,\dots,n$. 
Since $\varphi_i$ is continuous by Lemma \ref{lem:<ux>}, 
we obtain (\ref{max-min0}) as well as Theorem \ref{thm:BU-opt0}. 
Then LHS of (\ref{max-min0}) turns into 
\[
	\varphi_i(\bm u) = \underset{x\in A_i\cap H_{\bm u}^+}{\max}\langle u,x\rangle
	-\underset{x\in A_i\cap H_{\bm u}^+}{\min} \langle u,x\rangle
	=\delta^*(u\mid A_i\cap H_{\bm u}^+)-\delta_*(u\mid A_i\cap H_{\bm u}^+),
\]
and RHS of (\ref{max-min0}) turns into 
\[
	\varphi_i(-\bm u) = \underset{x\in A_i\cap H_{\bm u}^-}{\max}\langle u,x\rangle
	-\underset{x\in A_i\cap H_{\bm u}^-}{\min} \langle u,x\rangle
	=\delta^*(u\mid A_i\cap H_{\bm u}^-)-\delta_*(u\mid A_i\cap H_{\bm u}^-).
\]
Therefore we obtain (\ref{max-min2}). 
\end{proof}

We have just removed the assumption of the strict convexity of $A_i$. 
By taking its convex hull,  we do not need to require convexity either. 
So $A_i$ can be finite.  

\begin{theorem}\label{thm:BU-opt3}
Let $A_i\subset \R^n$ be a compact set 
whose convex hull has a non-empty interior for any $i=1,\dots,n$. 
Then there exists some $\bm u\in S^n$ such that 
both $A_i\cap H_{\bm u}^+$ and $A_i\cap H_{\bm u}^-$ are non-emopty and 
\begin{equation}\label{max-min3}
	\delta^*(u\mid {\rm co}\,A_i\cap H_{\bm u}^+)
	-\delta_*(u\mid {\rm co}\,A_i\cap H_{\bm u}^+)
	=\delta^*(u\mid {\rm co}\,A_i\cap H_{\bm u}^-)
	-\delta_*(u\mid {\rm co}\,A_i\cap H_{\bm u}^-)
\end{equation}
for all $i=1,\dots,n$.
\end{theorem}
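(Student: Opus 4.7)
The plan is to reduce Theorem~\ref{thm:BU-opt3} directly to Theorem~\ref{thm:BU-opt2} by replacing each $A_i$ with its convex hull. Concretely, I would apply Theorem~\ref{thm:BU-opt2} to the family $\{{\rm co}\,A_1,\dots,{\rm co}\,A_n\}$: each ${\rm co}\,A_i$ is convex by construction, it is compact because the convex hull of a compact subset of $\R^n$ is compact (a standard consequence of Carath\'eodory's theorem), and its interior is non-empty by hypothesis. Theorem~\ref{thm:BU-opt2} then supplies some $\bm u\in S^n$ for which both ${\rm co}\,A_i\cap H_{\bm u}^+$ and ${\rm co}\,A_i\cap H_{\bm u}^-$ are non-empty, and for which the desired equality (\ref{max-min3}) holds verbatim, since (\ref{max-min3}) is literally (\ref{max-min2}) applied to the sets ${\rm co}\,A_i$.

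It remains to strengthen the non-emptiness conclusion from ${\rm co}\,A_i$ back down to $A_i$, which is the only place where the weaker hypothesis of the present theorem has to be exploited. Suppose, toward contradiction, that $A_i\cap H_{\bm u}^+=\emptyset$; then $\langle u,x\rangle<u_{n+1}$ for every $x\in A_i$, and compactness of $A_i$ upgrades this to a uniform bound $\langle u,x\rangle\le u_{n+1}-\eta$ for some $\eta>0$. Any $y\in{\rm co}\,A_i$ is a convex combination of such $x$'s, so $\langle u,y\rangle\le u_{n+1}-\eta<u_{n+1}$, contradicting ${\rm co}\,A_i\cap H_{\bm u}^+\neq\emptyset$. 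The case $A_i\cap H_{\bm u}^-=\emptyset$ is handled symmetrically.

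There is essentially no substantive obstacle here; the real content of the theorem has already been absorbed into Theorem~\ref{thm:BU-opt2} via Borsuk--Ulam, and what remains are the two routine facts that convex hulls of compact sets in $\R^n$ are compact and that an affine inequality strict on a set is strict on its convex hull. The only point worth flagging is conceptual rather than technical: one must notice that (\ref{max-min3}) is stated with ${\rm co}\,A_i$ on both sides, so no further work (beyond the non-emptiness upgrade above) is needed to convert the output of Theorem~\ref{thm:BU-opt2} into the desired conclusion.
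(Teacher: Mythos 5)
Your proposal is correct and takes the same route as the paper, whose entire proof is the one-line remark that the result is a direct consequence of Theorem~\ref{thm:BU-opt2} applied to the sets ${\rm co}\,A_i$. You actually supply more detail than the paper does --- notably the compactness of ${\rm co}\,A_i$ via Carath\'eodory and the step showing that non-emptiness of ${\rm co}\,A_i\cap H_{\bm u}^{\pm}$ forces non-emptiness of $A_i\cap H_{\bm u}^{\pm}$, a point the paper leaves implicit.
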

\begin{proof}
This is a direct consequence of Theorem \ref{thm:BU-opt2}. 
\end{proof}

\section{Acknowledgements}

This research is supported by JSPS KAKENHI Grant Number 16K05278.

\newpage

\vspace{15pt}
{\leftskip 8.5cm \parindent 0mm
Hidefumi Kawasaki\\
Faculty of Mathematics\\
Kyushu University\\
Motooka 744, Nishi-ku\\
Fukuoka 819-0395, Japan\\ 
E-mail: \texttt{kawasaki.hidefumi.245@m.kyushu-u.ac.jp}
\par}
\end{document}